\newcommand{\ground}{\mathbf{k}}
\newcommand{\rR}{\mathbb{R}}
\DeclareMathOperator{\divergence}{\nabla \cdot}
\DeclareMathOperator{\curl}{\nabla \times} 
\newcommand{\ip}{\cdot}
\DeclareMathOperator{\e}{E}
\newtheorem{theorem}{Theorem}[section]
\newtheorem*{FTHPT}{The Fundamental Theorem of HPT}
\newtheorem{lemma}[theorem]{Lemma}
\theoremstyle{definition}
\newtheorem{definition}[theorem]{Definition}
\newtheorem{example}[theorem]{Example}
\theoremstyle{remark}
\newtheorem*{remark}{Remark}
\newtheorem*{question*}{Question}
\title[Homotopy Probability on a Manifold and the Euler Equation]{Homotopy Probability Theory on a Riemannian manifold and the Euler equation}
\author{Gabriel C. Drummond-Cole}
\address{Center for Geometry and Physics, 
Institute for Basic Science (IBS), Pohang, Republic of Korea 37673}
\email{gabriel@ibs.re.kr}
\thanks{This work was supported by IBS-R003-D1}
\author{John Terilla}
\address{Department of Mathematics, The Graduate Center and Queens
  College, The City University of New
  York, USA}
\email{jterilla@gc.cuny.edu}
 \keywords{probability, fluids, Riemannian manifolds, homotopy}
 \subjclass[2010]{55U35, 58Axx, 58Cxx, 60Axx, 76xx}
\begin{document}

\begin{abstract}
Homotopy probability theory is a version of probability theory in which the vector space of random variables is replaced with a chain complex.  
A natural example extends ordinary probability theory on 
a finite volume Riemannian manifold $M$. 

In this example, initial
conditions for fluid flow on $M$ are identified with collections of 
homotopy random variables and solutions to the Euler equation
are identified with homotopies between collections of homotopy
random variables. 

Several ideas about using homotopy probability theory to study fluid flow are introduced.

\end{abstract}

\maketitle
\setcounter{tocdepth}{1}
\tableofcontents 

\section{Introduction} 
Homotopy probability theory is a version of probability theory in which the vector space of 
random variables is replaced with a chain complex.  The ordinary probability theory on a Riemannian manifold, in which the random variables are measurable functions, sits within a larger homotopy probability theory in which collections of random variables may include differential forms.  Collections of differential forms should satisfy certain conditions in order to have meaningful, homotopy invariant statistics.  Collections of functions and forms that satisfy those conditions constitute collections of \emph{homotopy random variables}.  The conditions can be summarized by saying that the moment generating function of the collection  is closed with respect to the adjoint of the de Rham differential.  To define a moment generating function, parameters are chosen to keep track of the joint moments of the constituents of the collection.  In this paper, particular parameter rings are considered and the collections of homotopy random variables are identified with initial conditions for fluid flows, such as an initial density.  Solutions to fluid flow equations are identified with homotopies between collections of homotopy random variables parametrized by particular rings. In this regime the homotopy parameter plays the role of time.  The particular choice of fluid flow equation that is applicable depends on the choice of parameter ring. In this paper, the mass equation and the Euler equation appear.

The homotopy probability framework provides some new ideas for studying fluid flow and a couple of these ideas are introduced in Section \ref{section: applications} after reviewing homotopy probability theory on a Riemannian manifold and explaining the connection with fluids.  One 
idea is to study the homotopy invariant statistics afforded by homotopy 
probability theory which necessarily yield hierarchies of time-independent 
invariants of fluid flow.  A second idea is to use a finite cochain 
model for a manifold giving rise to a homotopy probability space 
quasi-isomorphic to the smooth one relevant for fluid flow.  In the 
combinatorial model, homotopies between combinatorial collections of homotopy random variables are solutions to finite dimensional ODEs and can naturally be transported to give homotopies between smooth collections of homotopy random variables.  Not all homotopies between collections of homotopy random variables constitute solutions to fluid flow equations.  This paper contains precise descriptions of the additional conditions that homotopies must satisfy in order to be solutions to the fluid equations.  These conditions and their combinatorial versions could be studied to get insights into smooth fluids, or could be used to develop new computer models.

The authors are grateful to Jae-Suk Park, Dennis Sullivan, and Scott Wilson for many useful discussions. 

\section{Ordinary probability theory on a Riemannian manifold}\label{section:rm}
Let $M$ be a connected, closed, oriented $n$-dimensional Riemannian manifold and let $dV$ be the associated volume form on $M$. Assume the metric is normalized so that $\int_M dV 
=1$. 
There is an ordinary probability space associated to $M$. 
The bounded measurable functions on $M$ 
are the random variables and the expectation is defined 
by 
\[\e(f)=\int_M f dV.\] 
For simplicity, restrict the random variables to $\Omega^0(M)$, the smooth 
functions on $M$. Note that the 
expectation factors 
\[
\xymatrix{\Omega^0(M)\ar[dr]_{\star}\ar[rr]^{\e} 
&& 
\rR 
\\ 
& \Omega^n(M)\ar[ur]_{\int_M} 
}\]
where the $n$-form $\star(f)$ is defined as $fdV$.

As a linear map to a one-dimensional vector space, 
the expectation $E:\Omega^0(M)\to \rR$ can be understood by 
studying its kernel. 
Stokes' theorem implies that if
$fdV=d\omega$ for an $(n-1)$-form 
$\omega$ then 
\[E(f)=\int_M fdV=\int_M d\omega = 0.\] The converse also holds because $H^n(M)$ is one dimensional, spanned by the cohomology class of $dV$. So understanding the kernel 
of the expectation amounts to understanding which functions $f$ have 
the property that $fdV$ is exact. 

The map $\star$ is part of the 
so called ``Hodge star'' isomorphism $\star:\Omega^\bullet(M)\to 
\Omega^{n-\bullet}(M)$. One can use the isomorphism $\star$ to transport the differential $d$ 
to the codifferential $\delta= \pm \star^{-1} d \star$ (see Appendix~\ref{appendix:musical appendix} for details). 
Then, in the following diagram 
\[
\xymatrix{
\Omega^0(M)\ar@{<->}[r]^-\star \ar@/^2.5pc/[rr]^E &\Omega^n(M)\ar[r]^-\int & \rR
\\
\Omega^1(M)\ar@{<->}[r]_-\star\ar[u]^{\pm \delta}&
\Omega^{n-1}(M)\ar[u]_{d}\ar[ur]_{0}
}
\]

the kernel of $E$ is precisely the image of 
${\delta}:\Omega^{1}(M)\to \Omega^0(M)$.

Now, extend the expectation to a 
map on all forms by setting $E(\alpha)=0$ for all 
$k$-forms $\alpha$ with $k\ge 1$. Regard the real 
numbers as a chain complex concentrated in degree zero with zero 
differential. Then the situation can be encapsulated in the statement  
\begin{quote}\emph{Expectation is a chain map 
$(\Omega^\bullet(M),{\delta}) \overset{E}{\longrightarrow} ( \rR,0)$ which induces an isomorphism in cohomology.} 
\end{quote} 
If one were only interested in 
expectations, this 
would be the final statement in this line of thinking. However, in probability 
theory, one is also interested in correlations among random 
variables. That is, the space of random variables is an algebra and 
so is the set of real numbers but expectation is not an 
algebra map. On the contrary, the failure of expectation to be an 
algebra map measures the extent to which random variables are 
statistically dependent. For 
example, the difference \[E(fg)-E(f)E(g)\] is a particular measure of 
correlation called the \emph{covariance of $f$ and $g$}. Note too 
that ${\delta}$ fails to be compatible with the algebra structure---the Hodge star operator does not respect the wedge product. Section \ref{section: homotopy random variables and homotopy statistics} explains how to combine the ways that ${\delta}$ and $E$ fail to respect the algebra structures to define a homotopy theory of statistical dependence.  But first, some of the basic ideas and terminology of homotopy probability theory are reviewed.

\section{Homotopy probability theory on a Riemannian manifold}\label{section:hpt} 

Homotopy probability spaces, studied before in~\cite{Park:L, DrummondColeParkTerilla:HPTII, ParkPark:EHTPISPH, DrummondColeTerilla:CIHPT,Park:HTPSICIHLA}, are now briefly recalled. There is a ground field (always $\rR$ in this paper) which is viewed as a chain complex concentrated in degree zero with zero differential. Recall that a chain complex is pointed if it is equipped with a chain map $1$ from the ground field, called the unit (so $d1=0$). Morphisms of pointed chain complexes are required to preserve the unit. 
\begin{definition} 
A \emph{homotopy pre-probability space} is a tuple $(V,1, d,m)$ where $(V,1,d)$ is a pointed chain complex and $(V,m)$ is a graded commutative associative algebra for which $1$ is a unit for $m$. A morphism of homotopy pre-probability spaces is a morphism of pointed chain complexes. 
\end{definition} 
Note that other than the unital condition, there is no assumed compatibility between the product and either the differential or a morphism of homotopy pre-probability spaces. A homotopy pre-probability space should be thought of as a generalization of the space of random variables. In order to define expectations and correlations, one needs an expectation map and a space for the expectation values to live in. 
\begin{definition} 
Let $\ground$ be a homotopy pre-probability space. A {\em $\ground$-valued homotopy probability space} is a homotopy pre-probability space $C$ equipped with a morphism $E$ of homotopy pre-probability spaces $E:C\to \ground$, called \emph{expectation}. A morphism of $\ground$-valued homotopy probability spaces is a morphism of homotopy pre-probability spaces that respects expectation. 
\end{definition} 
\begin{definition} 
An \emph{ordinary probability space} is an $\rR$-valued homotopy probability space concentrated in degree $0$. 
\end{definition} 

Now the motivating example from the previous section is formalized. 

\begin{definition} 
Let $M$ be a closed Riemannian manifold. Then $(\Omega^\bullet(M),1,{\delta},\wedge)$ is a homotopy pre-probability space called the \emph{homotopy pre-probability space associated to $M$.} 
Integration of functions against the volume form $E:\Omega^\bullet(M) \to \rR$ makes $(\Omega^\bullet(M),1,{\delta},\wedge)$ into an $\rR$-valued homotopy probability space called the ($\rR$-valued) \emph{homotopy probability space associated to $M$.} Restricting to the smooth functions $\Omega^0(M)$ yields the ordinary probability space associated to $M$. 
\end{definition} 

There are multiple natural generalizations of the homotopy probability space associated to a Riemannian manifold described above. 

\subsection{Cohomology-valued homotopy probability space} 

One variation that is possible is to replace the real numbers with the cohomology of the manifold. Let $M$ be as above. Then the real cohomology $H^\bullet(M)\coloneqq H^\bullet(M,\rR)$ is a unital graded commutative associative algebra. There is a canonical isomorphism of pointed graded vector spaces between $H^\bullet(M)$ and the harmonic forms 
\[\mathcal{H}^\bullet(M)=\{\alpha \in \Omega^\bullet(M): d\alpha={\delta}\alpha=0\}.\] 

The Riemannian metric on $M$ gives rise to a bilinear form on $\Omega^\bullet(M)$ over the ring of functions on $M$ and there is a natural orthogonal projection $\Omega^\bullet(M)\to \mathcal{H}^\bullet(M)$, which gives a map $E:\Omega^\bullet(M) \to H^\bullet(M)$ which is independent of basis, but can be described using a basis $\{e_i\}$ of $\mathcal{H}^\bullet(M)$ by 
\[ 
\omega\mapsto \text{ the class of }\sum_i \left(\int_M \omega\wedge \star e_i\right)e_i. 
\] 
Then $(\Omega^\bullet(M), 1, {\delta}, \wedge)$ and $(H^\bullet(M), 1, 0, \smile)$ are homotopy pre-probability spaces and $E:\Omega^\bullet(M)\to H^\bullet(M)$ makes $\Omega^\bullet(M)$ into an $H^\bullet(M)$-valued homotopy probability space, called the $H^\bullet$-valued homotopy probability space associated to $M$.

\subsection{Noncompact manifolds} 
One may want to work with a pre-probability space associated to a noncompact Riemannian manifold. In this case, some attention must be paid to the type of forms that are used~\cite{ArnoldFalkWinther:FEECHTA}. The following data and properties are desirable for the main ideas and results in HPT to apply. 
\begin{itemize} 
\item Spaces $W^k$ of differential $k$-forms with a linear expectation map $E$ defined on the functions $W^0$ 
\item For unitality, the constant function $1$ should be in $W^0$ and have expectation $E(1)=1$.
\item A degree $-1$ differential $\delta$ with $\ker(E)=\delta(W^1)$ 
\item A degree zero algebra structure on the sum $W=\bigoplus_k W^k$ 
\end{itemize} 
For an oriented Riemannian manifold $M$, there may be numerous choices for the spaces of differential forms with the desired properties. If $M$ has finite volume, uniformly bounded smooth differential forms are one choice. If one is indifferent about the resulting homotopy probability space being unital, compactly supported smooth forms would also work, whether $M$ is finite volume or not. Particular circumstances, as in example \ref{homotopygaussian} below where polynomial forms are defined, suggest other choices are possible. An idea expressed by Terence Tao in a blog post about algebraic probability theory~\cite{Tao:254AN5FP} involved functions whose $L^p$ norms are bounded for all $p$---these spaces can probably be extended to some sort of forms in order to do homotopy probability theory on a noncompact Riemannian manifold. 

There are other considerations, not pursued here, for the important case of manifolds with boundary.
\begin{example}\label{homotopygaussian} 
The homotopy Gaussian, considered in other contexts as well \cite{GwilliamJohnsonFreyd:HDFDFDIDBVF,ParkPark:EHTPISPH, DrummondColeTerilla:CIHPT}, is an example of a homotopy probability space associated to a noncompact finite-volume Riemannian manifold. Consider the ordinary Gaussian measure 
$d\mu=(2 \pi)^{-1/2}e^{-x^2/2}dx$ on $M=\rR$ and let $W^\bullet(\rR)$ consist of differential forms of the form $\rR[x]\langle 1, \eta\rangle$ where $\eta =(2\pi)^{-1}e^{-x^2} dx$. For any function $f\in W^0(\rR)$, one computes that \[\star f = f\, d\mu\text{ and }\star (f\, d\mu) = f\] and finds $\delta:W^1(\rR) \to W^0(\rR)$ is given by $\delta (f\eta)=f'-xf$. 

Thus $(W^\bullet(\rR),\delta )$ with the expectation map defined by $E(f)=\int\! f\, d\mu$ is a unital homotopy probability space in which probability computations can be performed with simple algebra,. For example, the moments of $f$ can be readily computed by knowing that $E$ vanishes on $\delta$ exact forms. The first two examples of such calculations are
\[0=E(\delta \eta)=E(x)\] and 
\[0=E(\delta x\eta)=E(x^2-1)\] so that $E(x^2)=E(1)=1$.  This example is explored in more detail in the references above.
\end{example} 

\section{Homotopy statistics}
\label{section: homotopy random variables and homotopy statistics}
Gromov recently wrote \cite{Gromov:SPESLM2014}
\begin{quote}{\em The success of probability theory decisively,
  albeit often invisibly, depends on symmetries of the system the
  theory applies to.}
\end{quote}
The example of the homotopy Gaussian and more generally the example of forms on a finite volume Riemannian manifold illustrate a method to organize symmetries of an ordinary probability theory when those symmetries arise geometrically.  The method is to generate a related homotopy probability space, and then use algebra to compute expectations.  Computations performed in the related homotopy probability space should genuinely relate to the original ordinary probability space and so they should be homotopy invariant in an appropriate sense---homotopic expectation morphisms should yield the same result on random variables, and homotopic collections of random variables should have the same homotopy statistics.  

Therefore, unlike in ordinary probability theory in which every element of $V$ is considered as a random variable, the notion of a random variable in a homotopy probability space is refined.  The starting point for defining collections of homotopy random variables and their homotopy statistics is that an element $X$ of $V$ has a meaningful expectation if and only if $dX=0$.  Random variables could be defined as closed elements of a homotopy probability space if one were only interested in expectation values.  However, this first guess of a definition of random variables is unsatisfactory from the point of view of other correlations and other statistics, such as covariance, because there is no compatibility assumed between $d$ and the product.  So, for example, $dX=0$ and $dY=0$ does not imply that $d(XY)=0$ and therefore the joint moment $E(XY)$ and the covariance $E(XY)-E(X)E(Y)$ will be meaningless.

In \cite{DrummondColeParkTerilla:HPTII,DrummondColeTerilla:CIHPT}, collections of homotopy random variables were defined using the technology of $L_\infty$ algebras and $L_\infty$ morphisms.  In this paper, collections of homotopy random variables are reformulated.  The reformulation is potentially more accessible and more in line with the language of probability theory, but only makes sense in the presence of an exponential function.  The exponential reformulation of homotopy random variables works perfectly well for homotopy probability theory on a Riemannian manifold since the infinite sums defining the exponential function all converge.  In other settings there would be algebraic technicalities to address.  One way (possibly overkill) to address the technicalities is to assume every ring in sight is a complete topological ring satisfying the so-called Arens or $m$-convexity condition~\cite{Arens:TSLOCTR}.  Such issues are not germane here and so the reader is invited to interpret all the definitions in this section as applying only for the homotopy pre-probability space $(\Omega^\bullet(M),1,\delta,\wedge)$ associated to a Riemannian manifold $M$ or to any setting in which the exponential function makes sense.

\subsection{Collections of homotopy random variables}
Let $(V,d,m)$ be a homotopy pre-probability space and consider $V[[s]]$, where $s$ is a formal parameter.  Extend the differential $d$ and the product $m$ linearly over $s$.  Then the moment generating function 
\[\exp(sX)=1+sX+\frac{s^2}{2}X^2+\frac{s^3}{3!}X^3+\cdots \] 
satisfies
$d\left (\exp\left (sX\right)\right)=0$ if and only if $d(X^n)=0$ for all $n$.  That is, $\exp(s X)$ is closed if and only if all the moments of $X$ are meaningful.  If $X$ has nonzero degree, then it is convenient to allow the formal parameter $s$ to have a nonzero degree so that $sX$ has degree $0$.  More generally, all the joint moments of members of a collection $\{X_i\}_{i\in \Lambda} \subset V$ will be well defined if and only if their joint moment generating function is closed: 
\[d\left (\exp\left (\sum_{i\in \Lambda} s^i X_i\right )\right)=0 \text{ in }V[[\{s_i\}_{i\in \Lambda}]]\]
since every finite joint moment of the $\{X_i\}$ appears as a single term in the expression $\exp\left (\sum_{i\in \Lambda} s^i X_i\right )$ with a unique formal parameter as a coefficient. 

The parameter ring $V[[\{s_i\}_{i\in \Lambda}]]$ is isomorphic to $V\otimes k[[\{s_i\}_{i \in \Lambda}]]$ but there are reasons to allow the ring parametrizing a collection of random variables to be more general than $k[[\{s_i\}_{i\in \Lambda}]]$.  For example, if one is interested in joint moments only up to a certain point, then it is only necessary that $d(\exp(\sum_{i\in \Lambda} s^i X_i))=0$ in $V \otimes R$ where $R$ is a quotient of the parameter ring $k[[\{s_i\}_{i \in \Lambda}]]$.  One potential problem is that quotients are often poorly behaved from a homotopical point of view. One solution is to replace such quotients with free resolutions, which requires moving from graded rings to differential graded rings. This is merely one aspect of the motivation behind using differential graded rings---neither the requisite homotopy theory nor any other motivating ideas are dwelled upon in this paper.

\begin{definition}\label{definition: HRV}
Let $R=(\mathbb{R}[[S]],d_R)$ be a power series ring on a set $S$ of graded variables equipped with a differential and let $(V,1,d,m)$ be a homotopy pre-probability space.
A \emph{collection of homotopy random variables parametrized by $R$} is a degree zero element $\mathcal{X}\in V\otimes R$ satisfying $\delta \left (\exp\left (\mathcal{X}\right )\right )=0$ and the \emph{homotopy statistics} of such a collection is defined to be $E(\exp(\mathcal{X}))\in R$.  
\end{definition}

The $\delta$ in the expression $\delta(\exp(\mathcal{X})=0$ means the tensor product differential for the tensor product $\Omega^\bullet(M)\otimes R$.  That is, if $\delta_M$ is the differential in $\Omega^\bullet(M)$ and $d_R$ is the differential in $R$, $\delta=\left(\delta_M\otimes 1 + 1\otimes d_R \right)$.  
Also, $E:V \to \rR$ is extended to a map $V\otimes R \to \rR \otimes R \simeq R$.

\subsection{Homotopies of collections of homotopy random variables}
A parameter ring $R$ can be extended using an algebraic model for the interval to $R[[t,dt]]$.  By convention $t$ has degree zero, $dt$ has degree $-1$, $d(t)=dt$ and $d(dt)=0$.  

\begin{definition}\label{homotopy}
Two collections $\mathcal{X}_0$ and $\mathcal{X}_1$ of homotopy random variables parametrized by a parameter ring $R$ are \emph{homotopic} if and only if there exists an $R[[t,dt]]$-collection $\mathcal{X}$ of homotopy random variables satisfying $\mathcal{X}\vert_{t=0,dt=0}=\mathcal{X}_0$ and $\mathcal{X}\vert_{t=1,dt=0}=\mathcal{X}_1$.  Such a collection $\mathcal{X}\in \Omega^\bullet\otimes R[[t,dt]]$ is called a \emph{homotopy} between the collections $\mathcal{X}_0$ and $\mathcal{X}_1$.  
\end{definition}

Homotopic random variables can be characterized as having cohomologous exponentials.

 \begin{lemma}\label{HomotopyLemma}
 Two collections of homotopy random variables $\mathcal{X}_0$ and 
 $\mathcal{X}_1$ parametrized by $R$ are homotopic if and only if $\exp(\mathcal{X}_0)-\exp(\mathcal{X}_1)=\delta \mathcal{Y}$ for some collection $\mathcal{Y}\in \Omega^\bullet(M)\otimes R$.
 \end{lemma}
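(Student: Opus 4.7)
The plan is to exploit the algebraic model $R[[t,dt]]$ for the interval in order to translate the cohomological identity $\exp(\mathcal{X}_0)-\exp(\mathcal{X}_1)=\delta\mathcal{Y}$ into the existence of a homotopy $\mathcal{X}$ with prescribed endpoints. Any degree zero element of $\Omega^\bullet(M)\otimes R[[t,dt]]$ decomposes uniquely as $\mathcal{X}(t,dt)=A(t)+B(t)\,dt$, with $A(t)$ of degree zero and $B(t)$ of degree one in $\Omega^\bullet(M)\otimes R[[t]]$. Since $A$ and $B\,dt$ commute in the graded-commutative product and $(B\,dt)^{2}=0$ (because $(dt)^{2}=0$), the exponential factors as
\[\exp(\mathcal{X})=\exp(A)(1+B\,dt)=\exp(A)+\exp(A)\,B\,dt.\]
Writing the total differential as $\delta=\delta'+d_t$ with $\delta'=\delta_M\otimes 1+1\otimes d_R$ and $d_t$ the interval differential, the condition $\delta\exp(\mathcal{X})=0$ separates by parity in $dt$ into the two equations
\[\delta'\exp(A(t))=0\qquad\text{and}\qquad \partial_t\exp(A(t))=-\delta'(\exp(A(t))\,B(t)),\]
where the second uses that $A$ commutes with itself, so $\partial_t\exp(A)=\exp(A)\dot A$.

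The forward implication then falls out by integrating the second equation in $t$ from $0$ to $1$, which gives
\[\exp(\mathcal{X}_1)-\exp(\mathcal{X}_0)=-\delta'\int_0^1\exp(A(t))\,B(t)\,dt,\]
so $\mathcal{Y}:=\int_0^1\exp(A(t))\,B(t)\,dt$ witnesses the desired identity. Conversely, given $\mathcal{Y}$ I would interpolate at the level of exponentials by setting $\Phi(t)=(1-t)\exp(\mathcal{X}_0)+t\exp(\mathcal{X}_1)=\exp(\mathcal{X}_0)-t\,\delta\mathcal{Y}$, and then define $A(t):=\log\Phi(t)$ and $B(t):=\Phi(t)^{-1}\mathcal{Y}$. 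By construction $\exp(A(t))=\Phi(t)$, with $A(0)=\mathcal{X}_0$ and $A(1)=\mathcal{X}_1$; $\Phi(t)$ is $\delta'$-closed for every $t$ because $\delta'\exp(\mathcal{X}_0)=\delta'\exp(\mathcal{X}_1)=0$ and $\Phi$ is affine in $t$; and the relation $\Phi(t)\,B(t)=\mathcal{Y}$ yields $\partial_t\Phi(t)=-\delta\mathcal{Y}=-\delta'(\Phi(t)B(t))$. So $\mathcal{X}:=A+B\,dt$ satisfies both equations above and restricts correctly at the endpoints.

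The main subtlety I anticipate is not logical but formal: one must verify that $\log\Phi(t)$ and $\Phi(t)^{-1}$ are bona fide elements of the completed power series ring $\Omega^\bullet(M)\otimes R[[t]]$. This reduces to observing that $\Phi(t)-1$ lies in the augmentation ideal (so the logarithmic and geometric series converge termwise) together with the fact that $\mathcal{X}_0$, $\mathcal{X}_1$, and $\delta\mathcal{Y}$ all commute with one another as degree zero elements in a graded-commutative algebra, so the standard formal identities for $\exp$ and $\log$ apply. The sign bookkeeping for $\delta=\delta'+d_t$ applied to $\exp(A)\,B\,dt$ also deserves a careful one-time check but presents no real obstacle under these degree conventions.
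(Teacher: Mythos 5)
Your proof is correct and follows essentially the same route as the paper's: the forward direction extracts the $dt$-coefficient of $\delta\exp(A+B\,dt)=0$ and integrates over $[0,1]$, and the converse builds exactly the homotopy $\log\bigl((1-t)\exp(\mathcal{X}_0)+t\exp(\mathcal{X}_1)\bigr)+\Phi(t)^{-1}\mathcal{Y}\,dt$ that the paper writes down. The one imprecision is your justification of $\log\Phi(t)$ and $\Phi(t)^{-1}$ via the augmentation ideal---$\exp(\mathcal{X}_0)-1$ need not lie in any augmentation ideal (for $R=\rR$ it is $e^{f_0}-1$)---but the fix is the one the paper records: factor out $\exp(\mathcal{X}_0)$ and expand the remainder as a formal power series in $t$ whose coefficients involve only exponentials of degree-zero elements, so convergence is $t$-adic and automatic.
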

\begin{proof}Let $\mathcal{X}_0 $ and $\mathcal{X}_1$ in  $\Omega^\bullet(M)\otimes R$ be homotopic. This means there exists $\mathcal{X}=A(t)+B(t)dt \in \Omega^\bullet(M)\otimes R\otimes \rR[[t,dt]]$ so that the evaluation $A(0)=\mathcal{X}_0$, the evaluation $A(1)=\mathcal{X}_1$ and $\delta(\exp(A(t)+B(t)dt)=0$. This final condition breaks into a $t$ term and a $dt$ term. The $dt$ term of $\delta(\exp(A(t)+B(t)dt)=0$ is the equation 
\[\left(\frac{\partial\exp(A(t))}{\partial t}+\delta(B(t)\exp(A(t)))\right) dt=0\] and integrating from $t=0$ to $t=1$ yields 
\[\exp(A(1))-\exp(A(0))+\delta \int_0^1 B(t)\exp(A(t))dt =0\]
which implies that $\exp(\mathcal{X}_1)-\exp(\mathcal{X}_0)$ is exact.

To prove the converse, 
suppose $\exp(\mathcal{X}_1)-\exp(\mathcal{X}_0)=\delta(\mathcal{Y})$ and $\delta(\exp(\mathcal{X}_i))=0$ for $i=0,1$. Consider the following element in $\Omega^\bullet(M)\otimes R\otimes \rR[[t,dt]]$:
\begin{multline*}
\mathcal{X}_t = \log\left(\exp(\mathcal{X}_0)(1-t)+\exp(\mathcal{X}_1)t\right) - \frac{\mathcal{Y}}{\exp(\mathcal{X}_0)(1-t)+\exp(\mathcal{X}_1)t}dt.
\end{multline*}
By inspection, evaluating at $t=0,dt=0$ and $t=1,dt=0$ yield, respectively, $\mathcal{X}_0$ and $\mathcal{X}_1$. Then
\begin{multline*}
\exp(\mathcal{X}_t)=\left(\exp(\mathcal{X}_0)(1-t)+\exp(\mathcal{X}_1)t\right)\left(
1-
\frac{\mathcal{Y}dt}{\exp(\mathcal{X}_0)(1-t)+\exp(\mathcal{X}_1)t}
\right)
\\=
\exp(\mathcal{X}_0)(1-t)+\exp(\mathcal{X}_1)t - \mathcal{Y}dt
\end{multline*}
and thus
\[
\delta(\exp(\mathcal{X}_t))=
\delta(\exp(\mathcal{X}_0))(1-t)-\exp(\mathcal{X}_0)dt+\delta(\exp(\mathcal{X}_1))t + \exp(\mathcal{X}_1)dt-\delta(\mathcal{Y})
\]
which vanishes by the assumptions on $\mathcal{X}_0$, $\mathcal{X}_1$, and $\mathcal{Y}$.

The following equivalent definition shows that as long as the exponentials make sense there is no issue with convergence.
\begin{multline*}
\mathcal{X}_t = \mathcal{X}_0 -\sum_{n=1}^\infty\frac{1}{n}(1-\exp(\mathcal{X}_1-\mathcal{X}_0))^nt^n + \exp(-\mathcal{X}_0)\mathcal{Y}\sum_{n=0}^\infty(1-\exp(\mathcal{X}_1-\mathcal{X}_0))^nt^ndt.
\end{multline*}
\end{proof}
This proof is naive, but the Lemma is true for conceptual reasons and in much more generality. The reference~\cite{DotsenkoPoncin:TTH} provides an excellent overview.

\begin{FTHPT}The statistics of homotopic collections of homotopy random variables parametrized by a ring $R$ have the same cohomology class in $R$.
\end{FTHPT}\label{thm:fundamental theorem}
\begin{proof}$E$ is a chain map so has the same value on elements with the same cohomology class.  Lemma \ref{HomotopyLemma} says that the joint moment generating functions of homotopic collections of homotopy random variables have the same cohomology class.
\end{proof}

\section{Homotopy random variables on a Riemannian manifold}\label{section:hrv on a Riemannian manifold}
The main idea of this paper is that solutions to fluid flow equations can be identified with homotopies of parametrized collections of homotopy random variables.  The homotopy parameter plays the role of time.  The other parameters keep track of various quantities related to the fluid such as velocity and density.  The fluid flow equations guarantee that the parametrized quantities satisfy the condition $\delta \exp(\mathcal{X})=0$ required for homotopies of random variables.  

The choice of parameter ring is delicate.  For different parameter rings, the general solution to $\delta \exp(\mathcal{X})=0$ decomposes into different equations and thus different rings are paired with different fluid flow equations.   For each parameter ring presented here, there is an injection
\[\left \{ \parbox{3cm}{\centering Solutions to fluid flow equations} \right\} \hookrightarrow \left\{ \parbox{4.5cm}{\centering Homotopies of collections of homotopy random variables}\right\}\]
The map is not a bijection and this section describes its image by giving explicit conditions and choices which imply that a homotopy of parameterized collections of homotopy random variables comes from a solution to the fluid flow equations.  Each of the several parameter rings discussed has its own strengths and weaknesses, as can be seen below. It is an interesting puzzle to find better parameter rings with more strengths and fewer weaknesses.

Various diffeo-geometric notions are used in this section for decomposing the equation 
$\delta \exp \sum s_i \omega_i=0$.  These notions, including several operators and their compatabilities, are reviewed in the appendix.

\subsection{The mass equation}
To begin with, look at a single ordinary random variable, that is, a collection of homotopy random variables parametrized by the $1$-dimensional parameter ring $R=\rR$.  Such a collection is a degree zero element $f$ in $\Omega^\bullet(M)$, that is, a function, satisfying $\delta \exp(f)=0$.  This equation is vacuous for a function---all collections of ordinary random variables are collections of homotopy random variables.  So a collection of homotopy random variables parametrized by $\rR$ is a function. In the sequel, it will be convenient to use the shorthand notation $\rho$ for $\exp(f)$.

Now look at a homotopy.  A homotopy of random variables parametrized by $\rR$ is a degree zero element $f(t)+X(t)dt$ in the homotopy pre-probability space $\Omega^\bullet(M)[[t,dt]]$. Here $f$ is a power series in $t$ whose coefficients are functions on $M$ and $X$ is a power series in $t$ whose coefficients are one-forms in $M$.  The series $f$ and $X$ must satisfy the equation 
\[
\delta \exp(f+Xdt)=0.
\]
Because $dt$ squares to zero, this exponential simplifies and we get
\[
\delta(\rho  + \rho Xdt) = 0
\]
which polarizes into the two expressions
\begin{align}
\delta \rho &=0\nonumber
\\
\dot\rho + \delta (\rho  X)&=0\label{MCmasseq}
\refstepcounter{equation}\tag{\theequation a}.
\end{align}
The unlabeled equation is again vacuous. Equation~(\ref{MCmasseq}) is not.

\begin{lemma}\label{lemma:masseq}
Solutions to Equation~(\ref{MCmasseq}) are in bijective correspondence to solutions to the \emph{mass equation}
\begin{align}
\dot{\rho}+\divergence{(\rho u)}&=0,\label{ordinarymasseq}\tag{\theequation b}
\end{align}
where $\rho$ is a positive \emph{density} function and $u$ a \emph{velocity} vector field on $M$, via the identification of the two $\rho$ variables and 
\begin{align*}
u &= X^\sharp.
\end{align*}
\end{lemma}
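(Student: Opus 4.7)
The plan is a direct translation via the musical isomorphism $\sharp$ between $1$-forms and vector fields (and its inverse $\flat$). I would define the bijection on data by $(\rho, X) \mapsto (\rho, X^\sharp)$ with inverse $(\rho, u) \mapsto (\rho, u^\flat)$; that these maps are mutually inverse is immediate from the standard properties of $\sharp$ and $\flat$ recorded in Appendix~\ref{appendix:musical appendix}. The $\rho$ variable is simply shared between the two formulations, so only the term involving $X$ (respectively $u$) has any content to match.

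The single computation driving the equivalence is the identity expressing the codifferential on $1$-forms in terms of the divergence of the dual vector field: for any $1$-form $\alpha$, one has $\delta \alpha = \pm \divergence \alpha^\sharp$, with the sign determined by the convention $\delta = \pm \star^{-1} d \star$ fixed in the appendix. Applying this to $\alpha = \rho X$ and using the $\rR$-linearity $(\rho X)^\sharp = \rho\, X^\sharp = \rho u$, Equation~(\ref{MCmasseq}) becomes Equation~(\ref{ordinarymasseq}) up to a global sign which is absorbed into the chosen conventions.

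The last thing to check is that the positivity of $\rho$ demanded by the mass equation is automatic on the homotopy-theoretic side. A collection of homotopy random variables parametrized by $\rR$ is a function $f$, and $\rho$ is shorthand for $\exp(f)$, which is strictly positive; conversely, any positive density arises this way through $f = \log \rho$. Extending this to the time-dependent setting of a homotopy is pointwise and poses no issue.

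The only real obstacle, if it deserves the name, is bookkeeping the signs so that the appendix's convention for $\delta$ is compatible with the sign in front of $\divergence$ in the mass equation. There is no analytic or homotopical subtlety: once the musical translation is in place, Equation~(\ref{MCmasseq}) and Equation~(\ref{ordinarymasseq}) are literally the same equation written in different languages, and the bijection is the identity on $\rho$ together with $\sharp$ on the second coordinate.
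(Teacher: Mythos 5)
Your proposal is correct and matches the paper's own (one-line) proof: both rest on the facts that $\flat$ and $\sharp$ are linear over functions and interchange $\delta$ on one-forms with the divergence, which by the appendix's convention $\divergence X = \delta(X^\flat)$ involves no sign discrepancy to absorb. The added remark on positivity of $\rho = \exp(f)$ is a harmless elaboration of what the paper leaves implicit.
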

\begin{proof}
The flat and sharp isomorphisms are linear over functions and interchange the divergence operator and $\delta$ on one-forms.
\end{proof}
Lemma~\ref{lemma:masseq} justifies referring to either Equation~(\ref{MCmasseq}) or Equation~(\ref{ordinarymasseq}) interchangably as the mass equation. 

One can reinterpret what homotopy probability theory says about the ordinary probability theory on $M$.  The exponential of any function is a positive function on $M$ which can be thought of as a density function.  Integrating density over the manifold computes mass.  Suppose that $f_0$ and $f_1$ are two functions on $M$ that give rise to the same mass \[\int_M e^{f_0} dV = \int_M e^{f_1} dV\] which on a connected manifold implies that the difference of densities is $\delta$-exact $e^{f_0}-e^{f_1}=\delta\alpha$.  Therefore, by Lemma \ref{HomotopyLemma} the functions $f_0$ and $f_1$ are homotopic collections of homotopy random variables.  So, there exists a homotopy $f(x,t)+X(x,t)dt$ with $f(x,0)=f_0$ and $f(x,1)=f_1$.   To interpret this homotopy, pick a basepoint $\gamma_0\in M$ and let $\gamma_t$ be the curve in $M$ determined by the time-dependent vector field $X(t)$.  Then, one can follow the value of the function $f$ along $\gamma$.  Dividing the mass equation by density and integrating with respect to time says 
\[f(\gamma_s,s) = f(x_0,0)-\int_0^s \divergence {X(\gamma_t,t)}dt.\] In particular, $f_1(\gamma_1)=f_0(\gamma_0)-\int_0^1 \divergence X(\gamma_t,t)dt$.  The function $f_1$ is the transport of the function $f_0$ along the flow $X$ minus the accrued divergence of $X$.

\subsection{The vorticity equation}
For the remainder of the section, assume that $M$ is dimension three. To generalize the following results to manifolds of dimension $n$, give $\epsilon$ degree $2-n$ and define curl and cross product suitably (see Appendix~\ref{appendix:musical appendix}).  The results are almost identical for dimensions $n\geq 5$, but for $n=2$ and $n=4$ there are subtleties not explored here.

In order to involve collections of homotopy random variables that are not ordinary random variables, a parameter with negative degree needs to be introduced.   Consider the parameter ring $\rR[\epsilon]$, where $\epsilon$ is a degree $-1$ variable. A generic collection of homotopy random variables parametrized by $\rR[\epsilon]$ is an element of the form $f+V\epsilon$ for $f$ a function and $V$ a one-form on $M$. Again, $\epsilon$ squares to zero and so $\exp(f+V\epsilon)=\rho  + \rho V\epsilon$. Then the data must satisfy the condition 
\begin{equation}
\delta(\rho V)=0,\label{vorticity div free eq}
\end{equation}
So the condition on $V$ is that $ (\rho V)^\sharp$ is a divergence free vector field.

Next, look at a homotopy.  A homotopy of $\rR[\epsilon]$-collections has the form
\[
f(t)+X(t)dt + V(t)\epsilon + \pi(t)dt\epsilon.
\]
where $f$, $X$, $V$, and $\pi$ are power series in $t$ with coefficients functions, one-forms, one-forms, and two-forms in $M$, respectively. The exponential is only slightly more complicated than before:
\[
\rho  + \rho Xdt + \rho V\epsilon + \rho (-XV+\pi)dt\epsilon
\]
(the sign in the final term comes from commuting the odd elements $V$ and $dt$)

Then these data must satisfy the mass equation along with Equation~(\ref{vorticity div free eq}) and the following:
\begin{equation}\refstepcounter{equation}
-\partial_t(\rho V) + \delta (\rho \pi - \rho XV)=0.
\label{MC vorticity eq}\tag{\theequation a}
\end{equation}  
\begin{lemma}\label{lemma: constant uniform density}
Every solution $u(t)$ to the vorticity equations for a nonviscous fluid with unit density 
\begin{align*}
\dot{\omega}=(\omega \ip \nabla)u - (u\ip \nabla)\omega
\tag{\theequation b}&& \text{unit density vorticity equation}\label{equation: vorticity incompressible, standard}\\
\divergence{u}=0 && \text{incompressible mass equation}\\
\omega=\curl(u) && \text{definition of $\omega$ as curl of $u$}
\end{align*}

gives rise to a homotopy of $\mathbb{R}[\epsilon]$-collections of homotopy random variables under the identifications $f=0$, $X=u^\flat$, $V=-\star dX$, $\pi=\frac{1}{2}\delta(X\wedge \star X)$. 

Conversely, for every homotopy of $R$-collections of homotopy random variables which satisfies the additional conditions:
\begin{align}
f =0&&\text{constant and uniform density constraint}\\
V =\star X && \text{vorticity constraint}\\
\pi=\tfrac{1}{2}\delta(X\wedge \star X)&&\text{kinetic constraint}
\end{align}
arises from a solution to the vorticity equation with constant unit density and velocity $X^\sharp$.
\end{lemma}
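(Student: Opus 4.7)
The plan is to prove both directions simultaneously by showing that, under the stated identifications, the three coefficient equations of the Maurer--Cartan condition $\delta\exp(\mathcal{X})=0$ are equivalent to the three listed vorticity equations. The coefficients of $dt$, $\epsilon$, and $dt\epsilon$ give, respectively, the mass equation, $\delta(\rho V)=0$, and equation~(\ref{MC vorticity eq}). The first two are auxiliary: with $\rho=1$ the mass equation becomes $\delta X=0$, equivalent to $\divergence u = 0$ under the musical isomorphism, while $\delta V = 0$ with $V=-\star dX$ is just $\delta\star dX = \pm\star d^{2}X=0$, i.e., the automatic identity $\divergence\curl u = 0$. So these two MC equations encode precisely the incompressibility of $u$.

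The dynamical content lies in equation~(\ref{MC vorticity eq}), which with $\rho=1$ reads $-\dot V + \delta\pi - \delta(X\wedge V)=0$. The crucial simplification is that $\pi = \tfrac{1}{2}\delta(X\wedge\star X)$ sits in the image of $\delta$, so $\delta\pi = \tfrac{1}{2}\delta^{2}(X\wedge\star X)=0$, and the equation reduces to $\dot V = -\delta(X\wedge V)$. I would then translate both sides into vector-field language using three standard compatibilities reviewed in the appendix: the identity $\omega^\flat = \star dX$, which turns the time-derivative side into $\dot\omega^\flat$ up to sign; the three-dimensional identity $\star(\alpha\wedge\beta) = (\alpha^\sharp\times\beta^\sharp)^\flat$ for $1$-forms $\alpha,\beta$, used to evaluate $\star(X\wedge V)$ as a scalar multiple of $(u\times\omega)^\flat$; and $\star d\gamma^\flat = (\curl\gamma)^\flat$ for a vector field $\gamma$, combined with $\delta=\pm\star d\star$ on $2$-forms. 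Assembling, $\delta(X\wedge V)$ becomes a constant multiple of $(\curl(u\times\omega))^\flat$, and the dynamical equation collapses to $\dot\omega = \curl(u\times\omega)$. The vector identity $\curl(u\times\omega) = u(\divergence\omega) - \omega(\divergence u) + (\omega\ip\nabla)u - (u\ip\nabla)\omega$, together with the incompressibility conditions already obtained (and the automatic $\divergence\omega = \divergence\curl u = 0$), then collapses this to the stated vorticity equation $\dot\omega = (\omega\ip\nabla)u - (u\ip\nabla)\omega$.

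The forward direction follows by direct substitution of the identifications into the three MC equations. For the converse, the three extra constraints force $f,V,\pi$ to be determined by $X$ alone, so the MC conditions reduce to the two auxiliary equations and the dynamical equation above; reading the translations in reverse yields a solution of the vorticity equation with $u = X^\sharp$. The main obstacle is sign bookkeeping: the Koszul sign in the tensor-product differential, the sign of $\delta = \pm\star d\star$ on $k$-forms (which depends on the parity of $k$ and the ambient dimension), and the sign of $V^\sharp$ relative to $\omega$ must all line up, but this is routine once the conventions of the appendix are fixed and introduces no conceptual difficulty.
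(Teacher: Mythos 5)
Your proposal is correct and follows essentially the same route as the paper's proof: you isolate the same three coefficient equations, observe that $\delta X=0$ and $\delta V=0$ are the incompressibility and automatic conditions, kill $\delta\pi$ because $\pi$ is $\delta$-exact, and convert the remaining dynamical equation to $\dot\omega=\curl(u\times\omega)$ via the same curl-of-cross-product identity and the divergence-free hypotheses. The paper's proof records the same chain $(-\dot V)^\sharp=(-\star d\star(X\wedge V))^\sharp=(\delta(XV))^\sharp$ and likewise dispatches the converse by running the argument in reverse.
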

\begin{remark}
In fact, the lemma remains true if the kenetic constraint is replaced with the constraint that $\pi$ be a $\delta$-closed two-form. The kinetic constraint is chosen here because it feeds into the generalization in Lemma~\ref{lemma: compressible Euler}.
\end{remark}
\begin{remark}
In~\cite{Sullivan:3DIFCMEMCAWP3DZVL}, Sullivan analyzed the vorticity equations for a nonviscous fluid with unit density\footnote{Sullivan's equation $\dot{V}=[X,V]$ differs by a sign from the conventional equation.}
and suggested that homotopy probability theory might be helpful in understanding a \emph{finite} models of fluids.  The authors were inspired by this suggestion to explore the link between HPT and fluids and found that there is a connection between HPT and \emph{smooth} models for fluids as well---the connection described in this paper.
\end{remark}
\begin{proof}
Let $u(t)$ be a solution to the vorticity equations and define $(f,X,V,\pi)$ as in the lemma. Then $\rho=\exp(f)=1$ and $\pi$ is $\delta$-closed so that the equations that must be satisfied for this data to be the data of a homotopy are:
\begin{align*}
\delta(X)&=0\\
\delta(V)&=0\\
-\dot{V}-\delta(XV)&=0.
\end{align*}
The mass equation implies the first of these. The second is true by definition of $V$; note also that $-V=\omega^\flat$ under the identifications. Finally, rewrite the right hand side of the vorticity equation, using the fact that both $u$ and $\omega$ are divergence free:
\[
(\omega \ip \nabla)u - (u\ip \nabla)\omega =
(\omega \ip \nabla)u - (u\ip \nabla)\omega+ u(\divergence{\omega})-\omega(\divergence{u})=\curl{(u\times \omega)}.
\]
Then reinterpreting the vorticity equation in light of the identifications of $X$ and $V$, this is
\[
(-\dot{V})^\sharp = \left(-\star d\left(X^\sharp\times V^\sharp\right)^\flat\right)^\sharp=(- \star d \star(X\wedge V))^\sharp=(\delta(XV))^\sharp,
\]
as desired. This argument may be run just as easily in reverse.
\end{proof}
\begin{remark}
There is a variation of this lemma using the same parameter ring without the constant and uniform density assumption. In this case, the relevant equation is not quite the vorticity equation, which is the curl of the Euler equation, but rather the curl of the Euler equation multiplied by the density. This seemingly innocuous change eliminates pressure terms from the resulting equation. 

In this potential variant, some modifications would be necessary to the identification of the forms $(f,X,V,\pi)$ involved in the collection of homotopy random variables in terms of the hydrodynamics variables $(\rho, u)$ because Lemma~\ref{lemma: constant uniform density} has already incorporated certain simplifications that are only possible due to the constant and uniform density assumption. This variant lemma and these modifications will not be stated explicitly here because the variant lemma is a corollary of Lemma~\ref{lemma: compressible Euler} and the appropriate identifications can be seen there.
\end{remark}

\subsection{The Euler equation}
Now consider the differential graded parameter ring $R=\rR[\epsilon,d\epsilon]$. Here $\epsilon$ is again degree $-1$ and $d\epsilon$ is degree $-2$. Continue to assume that the dimension of $M$ is three.

A generic $R$-collection of homotopy random variables is an element of the form 
\begin{equation}\label{general Euler rv}
f+V\epsilon+\sigma d\epsilon + \Psi \epsilon d\epsilon
\end{equation}

 for $f$ a function, $V$ a one-form, $\sigma$ a two-form, and $\Psi$ a three-form on $M$. The exponential is
 \[
 \rho  + \rho V\epsilon + \rho \sigma d\epsilon + \rho (V\sigma+\Psi)\epsilon d\epsilon
  \] (while $d\epsilon$ does not square to zero, the two-form $\sigma$ does). Then these data must satisfy the conditions of Equations~(\ref{MCmasseq})~and~(\ref{vorticity div free eq}) along with the further equations (the coefficients, respectively, of $d\epsilon$, $\epsilon d\epsilon$, and $(d\epsilon)^2$):
\begin{align}
\delta(\rho \sigma) - \rho V&=0
\label{V equation}\\
\delta(\rho V\sigma+\rho \Psi)&=0
\label{trivial equation}\\
-\rho V\sigma-\rho \Psi &= 0.\label{Psi equation}
\end{align}
The equations~(\ref{V equation})~and~(\ref{Psi equation}) determine $V$ and $\Psi$ in terms of $\sigma$ and $f$; then Equation~(\ref{vorticity div free eq}) follows from Equation~(\ref{V equation}) and Equation~(\ref{trivial equation}) follows from Equation~(\ref{Psi equation}), in both cases by applying $\delta$. Then an $R$-collection of homotopy random variables is a pair $(f,\sigma)$ with $f$ an arbitrary function and $\sigma$ an arbitrary $2$-form.

A homotopy between two $R$-collections of homotopy random variables is of the form
\begin{equation*}
f+ Xdt + V\epsilon + \pi dt\epsilon + \sigma d\epsilon + \Phi dtd\epsilon + \Psi \epsilon d\epsilon
\end{equation*}
where as before the variables are power series in $t$ with coefficients in forms of the appropriate degree. The exponential is
\begin{multline*}
\rho + \rho Xdt + \rho V\epsilon + 
\rho (-XV + \pi) dt\epsilon + \rho \sigma d\varepsilon+ 
\rho (X\sigma + \Phi) dtd\epsilon + 
\rho (V\sigma+\Psi) \epsilon d\epsilon
\end{multline*}
(there are no other terms because of the nilpotence of $\Omega^\bullet(M)$).

The equations these variables satisfy are the mass equation (Equation~(\ref{MCmasseq})) as well as Equations~(\ref{vorticity div free eq}) and~(\ref{MC vorticity eq}) above along with (the $t$-dependent versions of) Equations~(\ref{V equation})--(\ref{Psi equation}) above and the following equations, the coefficients of $dt\epsilon d\epsilon$ and $dtd\epsilon$, respectively:
\begin{align}
-\partial_t(\rho(V\sigma+\Psi))&=0;
\label{helicity equation}
\\
\rho (XV-\pi)+\partial_t(\rho \sigma)+\delta(\rho X\sigma + \rho \Phi)&=0.
 \refstepcounter{equation}\tag{\theequation a}\label{MC momentum equation}
\end{align}
However, several of these equations follow formally from one another. 
\begin{itemize}
\item Equation~(\ref{vorticity div free eq}) arises by applying $\delta$ to Equation~(\ref{V equation}), 
\item Equation~(\ref{MC vorticity eq}) arises by applying $\delta$ to Equation~(\ref{MC momentum equation}) and then using Equation~(\ref{V equation}) to replace the time derivative, and
\item Equations~(\ref{trivial equation}) and~(\ref{helicity equation}) arises by applying $\delta$ and $\partial_t$, respectively, to Equation~(\ref{Psi equation}).
\end{itemize}
Recall that as above, $V$ and $\Psi$ are determined by Equations~(\ref{V equation}) and~(\ref{Psi equation}); similarly, Equation~\ref{MC momentum equation} determines $\pi$. 
 
Then after simplification, a homotopy between two $R$-collections of random variables is the data of a collection $(f, X, \sigma, \Phi)$ which satisfy the mass equation.
\begin{remark}
Because a homotopy between two $R$-collections of random variables must satisfy only the mass equation and has two free variables $\sigma$ and $\Phi$, it may seem strange to try to relate it to fluid flow. That is, by varying the interpretation of the free variables, this could encapsulate any collection of variables where some part satisfies the mass equation. However, the equation seems fairly well-suited to the momentum equation in that the variables are generally quantities of independent physical interest. Perhaps a more judicious choice of parameter ring (in particular, one which is not contractible) might yield a version of the equation which does not have this underdetermination. On the other hand, it is also possible that there is a further natural algebraic condition to put on the homotopy which imposes the other conditions in the following lemma.
\end{remark}
\begin{lemma}\label{lemma: compressible Euler}
Every solution $(\rho,u,p)$ to the compressible Euler equations 
\begin{align*}
\dot{\rho}+\divergence{(\rho u)}=0&&\text{mass equation}\\
\dot{u} + \nabla\left(\frac{u\ip u}{2}\right) -u\times(\curl{u}) + \frac{\nabla p}{\rho}=0\tag{\theequation b}\label{momentum euler equation alternate}&&\text{Euler momentum equation}
\end{align*}
gives rise to a homotopy of $R$-collections of homotopy random variables under the identifications $\rho=\rho$, $X=u^\flat$, $V=-\frac{\star d(\rho X)}{\rho}$, $\pi=\frac{1}{2}\delta(X\wedge \star X)-\delta(X)\star X$, $\sigma=\star X$, $\Phi=\frac{\star p}{\rho}$, and $\Psi=X\wedge dX$.

Conversely, for every homotopy of $R$-collections of homotopy random variables which satisfies the additional conditions:
\begin{align}
\sigma =\star X && \text{velocity constraint}\\
\pi=\tfrac{1}{2}\delta(X\wedge \star X)-\delta(X)(\star X)&&\text{modified kinetic constraint}
\end{align}
arises from a solution to the Euler equations with density $\rho$, velocity $X^\sharp$, and pressure $\star (\rho\Phi)$.
\end{lemma}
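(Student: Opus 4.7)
The plan is to follow the same pattern as Lemma~\ref{lemma: constant uniform density}, now with varying density $\rho$ and pressure $p$ accommodated through the free parameters $f$ and $\Phi$. The preceding discussion has already distilled the content of a homotopy of $R$-collections: it is a quadruple $(f,X,\sigma,\Phi)$ satisfying Equation~(\ref{MCmasseq}), with $V$, $\Psi$, and $\pi$ determined algebraically by Equations~(\ref{V equation}), (\ref{Psi equation}), and~(\ref{MC momentum equation}), and with all other equations following automatically. It therefore suffices to check that the two constraints in the lemma (velocity and modified kinetic) impose precisely the Euler momentum equation on top of the mass equation.

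For the forward direction, given $(\rho,u,p)$, set $f=\log\rho$, $X=u^\flat$, $\sigma=\star X$, $\Phi=\star p/\rho$. The hydrodynamic mass equation is equivalent to Equation~(\ref{MCmasseq}) by Lemma~\ref{lemma:masseq}, and the velocity constraint holds by construction. Using the identities from the appendix relating $\delta$, $d$ and $\star$, one computes that $\delta(\rho\sigma)=\delta(\rho\star X)$ agrees with $\star d(\rho X)$ up to sign, yielding $V=-\star d(\rho X)/\rho$ as claimed; expanding $-V\wedge\sigma$ then produces $X\wedge dX=\Psi$ after a Leibniz rearrangement. The main step is then the momentum equation. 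Equation~(\ref{MC momentum equation}) determines
\[
\pi = X\wedge V + \frac{1}{\rho}\bigl[\partial_t(\rho\sigma) + \delta(\rho X\sigma) + \delta(\rho\Phi)\bigr].
\]
Substituting the identifications and using the appendix identities (interchange of $\delta$ and $\curl$ on one-forms, Leibniz for $\delta$, the $\curl(u\times v)$ formula), the first three terms on the right combine into contributions of $\dot u^\flat$, $\nabla(u\cdot u/2)$, and $-u\times\curl u$, while $\delta(\rho\Phi)/\rho$ produces $\nabla p/\rho$ in musical language. Matching the resulting expression against the prescribed formula $\pi = \tfrac12\delta(X\wedge\star X) - \delta(X)(\star X)$ becomes exactly the Euler momentum equation~(\ref{momentum euler equation alternate}). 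The converse is obtained by running the same calculation in reverse, setting $\rho=e^f$, $u=X^\sharp$, $p=\star(\rho\Phi)$, so that the modified kinetic constraint on $\pi$ combined with Equation~(\ref{MC momentum equation}) translates back to the Euler momentum equation.

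The principal technical obstacle lies in the momentum bookkeeping. The extra term $-\delta(X)(\star X)$ in the modified kinetic constraint, which vanished in the unit density case of Lemma~\ref{lemma: constant uniform density} because $\delta X=0$ there, must now reinstate exactly the correction needed for the $\rho^{-1}\partial_t(\rho\star X)$ term and the $\rho^{-1}\delta(\rho X\sigma)$ term to combine cleanly with $X\wedge V$ into the convective $-u\times\curl u$ and $\nabla(u\cdot u/2)$ pieces. No genuinely new idea is required beyond the identities collected in the appendix, but tracking signs and degrees through the musical and Hodge isomorphisms in dimension three demands careful accounting.
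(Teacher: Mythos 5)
Your proposal is correct and follows essentially the same route as the paper: verify that the stated identifications satisfy Equations~(\ref{V equation}) and~(\ref{Psi equation}) directly, then show the modified kinetic constraint together with Equation~(\ref{MC momentum equation}) is equivalent to the Euler momentum equation by expanding $\delta(\rho X\sigma)$ with the second-order (Batalin--Vilkovisky) identity for $\delta$ from the appendix and translating through the musical isomorphisms. The only caution is that your phrase ``Leibniz for $\delta$'' must mean the seven-term second-order relation, not a derivation property --- it is precisely the non-derivation term $-\rho\,\delta(X)\star X$ that accounts for the correction in the modified kinetic constraint, as you correctly observe.
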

\begin{proof}
The verification is an exercise, recorded here for convenience.

Suppose that $(\rho, u,p)$ is a solution to the compressible Euler equations. It is straightforward to check that the choice for $V$ satisfies the identity of Equations~(\ref{V equation}). To see that the choice for $\Psi$ satisfies Equation~(\ref{Psi equation}), one verifies
\[
-V\sigma = \frac{\star d(\rho X)}{\rho}\wedge \star X = \frac{1}{\rho}d(\rho X)\wedge X= \frac{d\rho}{\rho}\wedge X\wedge X + dX\wedge X = dX\wedge X.
\]
Finally, to see that the choice for $\pi$ satisfies Equation~(\ref{MC momentum equation}), one can first rewrite Equation~(\ref{MC momentum equation}) by using the Batalin--Vilkovisky relation (see Appendix~\ref{appendix:musical appendix}) as follows.
\begin{align*}
\delta(\rho X\sigma)&=\delta(\rho X)\sigma +\rho \delta(X\sigma)-X\delta(\rho\sigma) -\rho \delta(X)\sigma +\rho X\delta(\sigma)\\
&=
\delta(\rho X)\star X +\rho \delta(X\wedge \star X)-X\rho V -\rho \delta(X)\star X -\rho X\wedge \star d X
\\
&=
\rho(\pi-XV) + 
\delta(\rho X)\star X +\frac{\rho}{2} \delta(X\wedge \star X)-\rho X\wedge \star d X
.
\end{align*}
Then the left hand side of Equation~(\ref{MC momentum equation}) is equivalent to
\begin{multline*}
\quad{} \partial_t(\rho \sigma)+\delta(\rho \Phi)+\frac{\rho}{2} \delta(X\wedge \star X)-\rho X\wedge \star d X
\\=
(\dot{\rho}+\delta(\rho X))\star X + \rho\left (\star\dot{X} -X\wedge \star dX +\frac{1}{2} \delta(X\wedge \star X) +\star \frac{dp}{\rho}\right ).
\end{multline*}
The term $\dot{\rho}+\delta(\rho X)$ in this sum vanishes by Equation~(\ref{MCmasseq}). Then since $\rho$ is a unit and $\star$ an isomorphism, it remains only to be seen that the equation
\[
\dot{X} -\star(X\wedge \star dX) +\frac{1}{2} d\star(X\wedge \star X) + \frac{dp}{\rho}
\]
is equivalent to the Euler momentum equation. But the former is also
\[
(\dot{u})^\flat - \left(X^\sharp \times(\curl{X^\sharp})\right)^\flat + \left(\frac{1}{2}\nabla(X\ip X)\right)^\flat+\left(\frac{\nabla p}{\rho}\right)^\flat,
\]
which is the momentum equation up to the musical isomorphism.

This argument can be run in reverse for a given homotopy satisfying the constraints above.
\end{proof}

\section{Applications of the HPT framework to fluid flow}\label{section: applications}
The paper concludes with some speculative directions for future work.

\subsection{Homotopy statistics}
The homotopy probability theory framework provides some new ideas for studying fluid flow.  One idea is to use the homotopy statistics afforded by homotopy probability theory which yield hierarchies of time-independent numerical invariants.  For any collection $\mathcal{X}$ of homotopy random variables parametrized by $R$, the expected value of the moment generating function $E\left(\exp(\mathcal{X}\right))\in R$ is the complete set of homotopy statistics of the collection $\mathcal{X}$:  if $\mathcal{X}=\sum_i s_iX_i$, then the joint moment $E\left(X^{r_1}_{i_1}\cdots X^{r_k}_{i_k}\right)$ is determined by the coefficient of $s^{r_1}_{i_1}\cdots s^{r_k}_{i_k}$ in $E\left(\exp(\mathcal{X}\right))\in R$.
The fundamental theorem of homotopy probability theory implies that homotopic collections of homotopy random variables have statistics that differ by exact elements of the parameter ring.  In particular, if $R$ has no cohomology, then homotopic collections of homotopy random variables have exactly the \emph{same} statistics.  

More explicitly, suppose $(\rho,u,p)$ is a solution to the compressible Euler equations on a compact Riemannian manifold $M$ and $H(t,dt)=f(t)+X(t)dt+\cdots + \Psi(t) \epsilon d\epsilon$ is the corresponding homotopy of collections of homotopy random variables from Lemma~\ref{lemma: constant uniform density}.  The Fundamental Theorem implies that the statistics of $H(t_1,0)$ and $H(t_2,0)$ have the same cohomology.

The statistics depend on the expectation map.  In the $\rR$-valued homotopy probability space associated to $M$, the expectation map, and more generally any multilinear map built out of the expectation and the product on $\Omega^\bullet(M)$, vanishes outside degree zero.  Then the statistics of a parameterized collection of homotopy random variables only has access to the degree zero number \[E\exp(f(t)) =\int_M \rho(t) dV. \]
This is the integral of the density over the manifold, which yields the mass of the fluid, a time independent quantity.  However, the homotopy statistics with values in \emph{any} pre-homotopy probability space with zero differential will produce time independent invariants of fluids.  Perhaps investigating the statistics for other expectation maps valued in possibly other pre-probability spaces (such as the cohomology valued expectation) would assist with problem described by Arnold and Khesin in~\cite[p. 176]{ArnoldKhesin:TMH}: \begin{quote}\emph{The dream is to define\ldots a hierarchy of invariants for generic vector fields such that, whereas all the invariants of order $\le k$ have zero value for a given field and there exists a nonzero invariant of order $k+1$, this nonzero invariant provides a lower bound for the field eneregy.}\end{quote}
For example, the three-form $\Psi$ that appears in one of the terms in a generic collection of homotopy random variables parametrized by $\rR[\epsilon,d\epsilon]$.  For the collections of homotopy random variables coming from solutions to the Euler equation, $\int_M \Psi$ computes the helicity of the fluid ~\cite{ArnoldKhesin:TMH}.
\subsection{Change of parameter ring}
As indicated in Section~\ref{section:hrv on a Riemannian manifold}, there is room for improvement in the choice of parameter ring. Ideally, a clever choice would eliminate some of the necessary constraints, incorporate other features like energy dynamics or viscosity, or provide a deeper conceptual understanding. One note is that while all the parameter rings considered here are differential graded algebras, one could also imagine choosing a parameter ring which is itself not a differential graded algebra but rather a homotopy pre-probability space (there is a tensor product of homotopy pre-probability spaces).

\subsection{Finite models}
Another way that homotopy probability may contribute to fluid flow is to produce new combinatorial models.   Indeed, this idea was proposed in \cite{Sullivan:3DIFCMEMCAWP3DZVL} and was the idea that first motivated this paper.
A  finite cochain model for a Riemannian manifold $M$ gives rise to a homotopy probability space quasi-isomorphic to the smooth homotopy probability space associated to $M$.  What might be called ``the second fundamental theorem of HPT'' is the fact that collections of homotopy random variables can be transported naturally along morphisms.  Therefore, homotopies between collections of homotopy random variables in the finite cochain model, which are solutions to finite dimensional ordinary differential equations and so always exist, can be transported to a homotopy of collections of homotopy random variables in the smooth model.  At this level of generality, such tranpsorted homotopies need not satisfy the constraints of the various lemmas of Section~\ref{section:hrv on a Riemannian manifold} and therefore need not satisfy any version of the Euler equation.  However, one could study combinatorial versions of these constraints (like a combinatorial version of curl \cite{Wilson:DFFFM,Sullivan:3DIFCMEMCAWP3DZVL}).  An understanding of how the combinatorial constraints relate to the smooth constraints under transport would yield the ingredients for both new finite models of fluid flow as well as a theoretical tool to use finite methods to study smooth fluid flow.

\appendix
\section{Vector fields and differential forms}\label{appendix:musical appendix} 
Let $M$ be a finite volume Riemannian manifold of dimension $n$, let $TM$ denote the tangent 
bundle of $M$, and let $V^k(M):=\Gamma(\Lambda^k TM)$ denote the 
degree $k$ multivector fields on $M$. The metric $g$ on $M$ defines a bundle 
isomorphism between $TM$ and $T^\star{}M$ which extends to the exterior 
products and then to sections. Sometimes these isomorphisms are 
called the musical isomorphisms and are denoted 
\begin{equation} 
\label{musical} 
\xymatrix{
	V^k(M)\ar@<.6ex>[r]^\flat & \Omega^k(M)\ar@<.6ex>[l]^\sharp
}
\end{equation} 
Familiar operators from multivariable calculus correspond to familiar operators in differential geometry. All of the following can either be taken as definitions in the Riemannian context for arbitrary dimension $n$ and degrees $i$ and $j$ or verified in a local coordinate chart with any standard definition in $\rR^3$, where by choosing $i=j=1$ throughout one restricts to the usual operators on and between functions and vector fields alone. 
\begin{itemize}
\item The inner product on one-forms is defined as $\langle X^\flat, Y^\flat\rangle=g(X,Y)$. 
\item This inner product is extended linearly over functions to $i$-forms. 
\item The top forms are one dimensional and the choice of orientation on $M$ gives a preferred unit $n$-form $dV$. 
\item The Hodge star operator $\star: \Omega^i(M)\to \Omega^{n-i}(M)$ is defined by $\alpha \wedge \star \beta = \langle \alpha,\beta\rangle dV$.
\item The codifferential $\delta:\Omega^{i}(M)\to \Omega^{i-1}(M)$ is defined as $(-1)^{ni+1}\star d\star$.
\item The divergence $\divergence{}:V^i(M)\to V^{i-1}(M)$ corresponds to the codifferential $\delta:\Omega^i(M)\to\Omega^{i-1}(M)$. That is, $\divergence{X}=\delta(X^\flat)$.
\item The gradient $\nabla:V^{i-1}(M)\to V^{i}(M)$ corresponds to the de Rham differential $d$. That is, $\nabla f =  (df)^\sharp$.
\item The curl $\curl{}:V^i(M)\to V^{n-i-1}(M)$ corresponds to the operator $\star{}d$. That is, $\curl{X}= \left(\star d(X^\flat)\right)^\sharp.$
\item The cross product $V^i(M)\otimes V^j(M)\to V^{n-i-j}(M)$ corresponds to the operator $\star\wedge$. That is, $X\times Y=\left(\star(X^\flat \wedge  Y^\flat)\right)^\sharp$.
\end{itemize}
There is one further property that will be used which describes the compatibility between the codifferential $\delta$ and the wedge product on forms. Unlike the de Rham differential $d$, which is a derivation (that is, a first order differential operator), the codifferential is a \emph{second} order differential operator. Then the equation it satisfies for arbitrary homogeneous forms $\alpha$, $\beta$, and $\gamma$ is
\begin{align*}
\delta(\alpha\wedge \beta\wedge \gamma)&=\delta(\alpha\wedge\beta)\wedge \gamma + (-1)^{|\alpha|}\alpha\wedge \delta(\beta\wedge \gamma) + (-1)^{|\beta||\gamma|}\delta(\alpha\wedge \gamma)\wedge \beta \\&\quad{}- \delta(\alpha)\wedge\beta\wedge\gamma -(-1)^{|\alpha|}\alpha\wedge \delta(\beta)\wedge \gamma -(-1)^{|\alpha|+|\beta|}\alpha\wedge \beta\wedge\delta(\gamma)
\end{align*}
as can be verified directly.

\bibliography{references-2016}
\bibliographystyle{amsalpha}
\end{document}